\newtheorem{theorem}{Theorem}
\theoremstyle{plain}
\newtheorem{definition}{Definition}
\newtheorem{example}{Example}
\numberwithin{equation}{section}
\begin{document}
\title[The uniqueness of involution in locally C*-algebras]{A note on the
uniqueness of involution in locally C*-algebras}
\author{Alexander A. Katz}
\address{Dr. Alexander A. Katz, Department of Mathematics and Computer
Science, St. John's College of Liberal Arts and Sciences, St. John's
University, 300 Howard Avenue, DaSilva Academic Center 314, Staten Island,
NY 10301, USA}
\email{katza@stjohns.edu}
\date{January 2, 2012}
\thanks{2010 AMS Subject Classification: Primary 46K05}
\keywords{C*-algebras, locally C*-algebras, projective limit of projective
family of C*-algebras}

\begin{abstract}
In the present note we show that the involution in locally C*-algebras is
uniquely determined.
\end{abstract}

\maketitle

\section{Introduction}

One of the important basic facts of the theory of $C^{\ast }$\textbf{%
-algebras} is that the unary operation of involution in a $C^{\ast }$%
-algebra is uniquely determined. This property was first observed in 1955 by
Bohnenblust and Karlin in\textbf{\ }\cite{BohnenblustKarlin1955}\textbf{\ (}%
see as well\textbf{\ }\cite{Rickart1960}\textbf{\ }for a nice exposition%
\textbf{).}

The Hausdorff projective limits of projective families of Banach algebras as
natural locally-convex generalizations of Banach algebras have been studied
sporadically by many authors since 1952, when they were first introduced by
Arens \cite{Arens1952} and Michael \cite{Michael1952}. The Hausdorff
projective limits of projective families of $C^{\ast }$-algebras were first
mentioned by Arens \cite{Arens1952}. They have since been studied under
various names by many authors. Development of the subject is reflected in
the monograph of Fragoulopoulou \cite{Fragoulopoulou2005}. We will follow
Inoue \cite{Inoue1971} in the usage of the name \textbf{locally }$C^{\ast }$%
\textbf{-algebras} for these algebras.

The purpose of the present notes is to show that the unary operation of
involution in locally $C^{\ast }$-algebras is uniquely determined.

\section{Preliminaries}

First, we recall some basic notions on topological $^{\ast }$-algebras. A $%
^{\ast }$-algebra (or involutive algebra) is an algebra $A$ over $%
%TCIMACRO{\U{2102} }%
%BeginExpansion
\mathbb{C}
%EndExpansion
$ with an involution 
\begin{equation*}
^{\ast }:A\rightarrow A,
\end{equation*}
such that 
\begin{equation*}
(a+\lambda b)^{\ast }=a^{\ast }+\overline{\lambda }b^{\ast },
\end{equation*}
and 
\begin{equation*}
(ab)^{\ast }=b^{\ast }a^{\ast },
\end{equation*}%
for every $a,b\in A$ and $\lambda \in $ $%
%TCIMACRO{\U{2102} }%
%BeginExpansion
\mathbb{C}
%EndExpansion
$.

A seminorm $\left\Vert .\right\Vert $ on a $^{\ast }$-algebra $A$ is a $%
C^{\ast }$-seminorm if it is submultiplicative, i.e. 
\begin{equation*}
\left\Vert ab\right\Vert \leq \left\Vert a\right\Vert \left\Vert
b\right\Vert ,
\end{equation*}

and satisfies the $C^{\ast }$-condition, i.e. 
\begin{equation*}
\left\Vert a^{\ast }a\right\Vert =\left\Vert a\right\Vert ^{2},
\end{equation*}%
for every $a,b\in A.$ Note that the $C^{\ast }$-condition alone implies that 
$\left\Vert .\right\Vert $ is submultiplicative, and in particular 
\begin{equation*}
\left\Vert a^{\ast }\right\Vert =\left\Vert a\right\Vert ,
\end{equation*}%
for every $a\in A$ (cf. for example \cite{Fragoulopoulou2005}).

When a seminorm $\left\Vert .\right\Vert $ on a $^{\ast }$-algebra $A$ is a $%
C^{\ast }$-norm, and $A$ is complete in in the topology generated by this
norm, $A$ is called a $C^{\ast }$\textbf{-algebra}. The following theorem is
valid.

\begin{theorem}[Bohnenblust and Karlin \protect\cite{BohnenblustKarlin1955}]

The unary operation of involution in a $C^{\ast }$-algebra is uniquely
determined.
\end{theorem}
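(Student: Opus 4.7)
The plan is to combine a Banach-algebraic characterization of self-adjoint elements (one that does not refer to any involution) with the uniqueness of the Cartesian decomposition. Without loss of generality I will adjoin a unit and assume $A$ is unital, since any $C^{\ast }$-involution extends uniquely to the unitization. Suppose $\ast _{1}$ and $\ast _{2}$ are two involutions both turning $(A,\Vert \cdot \Vert )$ into a $C^{\ast }$-algebra, and let $H_{j}=\{a\in A:a^{\ast _{j}}=a\}$ for $j=1,2$.

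Define the purely norm-theoretic set
\begin{equation*}
H(A)=\{h\in A:\Vert \exp (ith)\Vert =1\text{ for all }t\in \mathbb{R}\},
\end{equation*}
whose definition does not involve any involution. The first step is to show $H_{1}=H(A)=H_{2}$. The inclusion $H_{j}\subseteq H(A)$ is immediate: if $h^{\ast _{j}}=h$, then applying $\ast _{j}$ term-by-term to the exponential series (which is legal because $\ast _{j}$ is norm-continuous and antilinear) gives $(\exp (ith))^{\ast _{j}}=\exp (-ith)=(\exp (ith))^{-1}$, so $\exp (ith)$ is $\ast _{j}$-unitary and therefore has norm one by the $C^{\ast }$-axiom.

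The harder direction is $H(A)\subseteq H_{j}$, and this is the main obstacle. Given $h\in H(A)$, set $v=\exp (ith)$, which is invertible with $v^{-1}=\exp (-ith)$ and $\Vert v\Vert =\Vert v^{-1}\Vert =1$. The $C^{\ast }$-axiom for $\ast _{j}$ gives $\Vert v^{\ast _{j}}v\Vert =\Vert v\Vert ^{2}=1$; on the other hand, $(v^{\ast _{j}}v)^{-1}=v^{-1}(v^{-1})^{\ast _{j}}$, which has norm $\Vert v^{-1}\Vert ^{2}=1$. Thus $v^{\ast _{j}}v$ is a $\ast _{j}$-positive element whose spectrum lies in $[0,1]$ (since $\Vert v^{\ast _{j}}v\Vert \leq 1$) and also in $[1,\infty )$ (since $\Vert (v^{\ast _{j}}v)^{-1}\Vert \leq 1$), so $\sigma (v^{\ast _{j}}v)=\{1\}$ and hence $v^{\ast _{j}}v=1$ by the Gelfand transform on $C^{\ast }(1,v^{\ast _{j}}v)$; symmetrically $vv^{\ast _{j}}=1$. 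Therefore $(\exp (ith))^{\ast _{j}}=\exp (-ith)$ for every real $t$, and differentiating at $t=0$ (equivalently, comparing first-order terms in the two power series $\exp (-ith^{\ast _{j}})$ and $\exp (-ith)$) yields $h^{\ast _{j}}=h$.

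With $H_{1}=H(A)=H_{2}$ established, the proof concludes quickly. For any $a\in A$ and either $j$, the Cartesian decomposition
\begin{equation*}
a=\tfrac{a+a^{\ast _{j}}}{2}+i\cdot \tfrac{a-a^{\ast _{j}}}{2i}
\end{equation*}
lies in $H(A)+iH(A)$, and this sum is direct because $H(A)\cap iH(A)=\{0\}$: if $h=ik$ with $h,k\in H(A)$, then $h=h^{\ast _{j}}=(ik)^{\ast _{j}}=-ik=-h$, forcing $h=0$ and hence $k=0$. Uniqueness of the decomposition shows that the real and imaginary parts of $a$ do not depend on the chosen involution, so $a^{\ast _{1}}=a^{\ast _{2}}$ for every $a$, completing the proof.
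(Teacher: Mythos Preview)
Your argument is correct and follows the classical Vidav--Bohnenblust--Karlin route: one characterizes the self-adjoint elements of a unital Banach algebra purely in terms of the norm via the hermitian condition $\Vert \exp(ith)\Vert =1$, shows this set coincides with the $\ast_j$-self-adjoints for any $C^{\ast}$-involution, and then recovers the involution from the (unique) Cartesian decomposition. Each step you wrote checks out; in particular, the spectral squeeze giving $v^{\ast_j}v=1$ and the power-series comparison yielding $h^{\ast_j}=h$ are the standard moves.

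There is nothing to compare against in the paper: its proof of this theorem consists solely of the sentence ``See for example \cite{Rickart1960} for details.'' The result is quoted as background for the main Theorem~2 on locally $C^{\ast}$-algebras, and the authors do not reproduce any argument. The proof you have written is precisely the kind of argument one finds in the cited references, so you have supplied what the paper deliberately omitted.
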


\begin{proof}
See for example \cite{Rickart1960} for details.
\end{proof}

A topological $^{\ast }$-algebra is a $^{\ast }$-algebra $A$ equipped with a
topology making the operations (addition, multiplication, additive inverse,
involution) jointly continuous. For a topological $^{\ast }$-algebra $A$,
one puts $N(A)$ for the set of continuous $C^{\ast }$-seminorms on $A.$ One
can see that $N(A)$ is a directed set with respect to pointwise ordering,
because 
\begin{equation*}
\max \{\left\Vert .\right\Vert _{\alpha },\left\Vert .\right\Vert _{\beta
}\}\in N(A)
\end{equation*}%
for every $\left\Vert .\right\Vert _{\alpha },\left\Vert .\right\Vert
_{\beta }\in N(A),$ where $\alpha ,\beta \in \Lambda ,$ with $\Lambda $
being a certain directed set.

For a topological $^{\ast }$-algebra $A$, and $\left\Vert .\right\Vert
_{\alpha }\in N(A),$ $\alpha \in \Lambda $, 
\begin{equation*}
\ker \left\Vert .\right\Vert _{\alpha }=\{a\in A:\left\Vert a\right\Vert
_{\alpha }=0\}
\end{equation*}%
is a $^{\ast }$-ideal in $A$, and $\left\Vert .\right\Vert _{\alpha }$
induces a $C^{\ast }$-norm (we as well denote it by $\left\Vert .\right\Vert
_{\alpha }$) on the quotient $A_{\alpha }=A/\ker \left\Vert .\right\Vert
_{\alpha }$, and $A_{\alpha }$ is automatically complete in the topology
generated by the norm $\left\Vert .\right\Vert _{\alpha },$ thus is a $%
C^{\ast }$-algebra (see \cite{Fragoulopoulou2005} for details). Each pair $%
\left\Vert .\right\Vert _{\alpha },\left\Vert .\right\Vert _{\beta }\in N(A),
$ such that 
\begin{equation*}
\beta \succeq \alpha ,
\end{equation*}%
$\alpha ,\beta \in \Lambda ,$ induces a natural (continuous) surjective $%
^{\ast }$-homomorphism 
\begin{equation*}
g_{\alpha }^{\beta }:A_{\beta }\rightarrow A_{\alpha }.
\end{equation*}

Let, again, $\Lambda $ be a set of indices, directed by a relation
(reflexive, transitive, antisymmetric) $"\preceq "$. Let 
\begin{equation*}
\{A_{\alpha },\alpha \in \Lambda \}
\end{equation*}%
be a family of $C^{\ast }$-algebras, and $g_{\alpha }^{\beta }$ be, for 
\begin{equation*}
\alpha \preceq \beta ,
\end{equation*}%
the continuous linear $^{\ast }$-mappings 
\begin{equation*}
g_{\alpha }^{\beta }:A_{\beta }\longrightarrow A_{\alpha },
\end{equation*}%
so that 
\begin{equation*}
g_{\alpha }^{\alpha }(x_{\alpha })=x_{\alpha },
\end{equation*}%
for all $\alpha \in \Lambda ,$ and 
\begin{equation*}
g_{\alpha }^{\beta }\circ g_{\beta }^{\gamma }=g_{\alpha }^{\gamma },
\end{equation*}%
whenever 
\begin{equation*}
\alpha \preceq \beta \preceq \gamma .
\end{equation*}%
\ Let $\Gamma $ be the collections $\{g_{\alpha }^{\beta }\}$ of all such
transformations.\ Let $A$ be a $^{\ast }$-subalgebra of the direct product
algebra%
\begin{equation*}
\dprod\limits_{\alpha \in \Lambda }A_{\alpha },
\end{equation*}%
so that for its elements 
\begin{equation*}
x_{\alpha }=g_{\alpha }^{\beta }(x_{\beta }),
\end{equation*}%
for all%
\begin{equation*}
\alpha \preceq \beta ,
\end{equation*}%
where 
\begin{equation*}
x_{\alpha }\in A_{\alpha },
\end{equation*}%
and%
\begin{equation*}
x_{\beta }\in A_{\beta }.
\end{equation*}

\begin{definition}
The $^{\ast }$-algebra $A$ constructed above is called a \textbf{Hausdorff} 
\textbf{projective limit} of the projective family 
\begin{equation*}
\{A_{\alpha },\alpha \in \Lambda \},
\end{equation*}%
relatively to the collection%
\begin{equation*}
\Gamma =\{g_{\alpha }^{\beta }:\alpha ,\beta \in \Lambda :\alpha \preceq
\beta \},
\end{equation*}%
and is denoted by%
\begin{equation*}
\underleftarrow{\lim }A_{\alpha },
\end{equation*}%
and is called the Arens-Michael decomposition of $A$.
\end{definition}

It is well known (see, for example \cite{Treves1967}) that for each $x\in A,$
and each pair $\alpha ,\beta \in \Lambda ,$ such that $\alpha \preceq \beta ,
$ there is a natural projection 
\begin{equation*}
\pi _{\beta }:A\longrightarrow A_{\beta },
\end{equation*}%
defined by 
\begin{equation*}
\pi _{\alpha }(x)=g_{\alpha }^{\beta }(\pi _{\beta }(x)),
\end{equation*}%
and each projection $\pi _{\alpha }$ for all $\alpha \in \Lambda $ is
continuous.

\begin{definition}
A topological $^{\ast }$-algebra $A$ over $\mathbb{C}$\ \ is called a 
\textbf{locally }$C^{\ast }$\textbf{-algebra} if there exists a projective
family of $C^{\ast }$-algebras 
\begin{equation*}
\{A_{\alpha };g_{\alpha }^{\beta };\alpha ,\beta \in \Lambda \},
\end{equation*}%
so that 
\begin{equation*}
A\cong \underleftarrow{\lim }A_{\alpha },
\end{equation*}%
i.e. $A$ is topologically $^{\ast }$-isomorphic to a projective limit of a
projective family of $C^{\ast }$-algebras, i.e. there exits its
Arens-Michael decomposition of $A$ composed entirely of $C^{\ast }$-algebras.
\end{definition}

A topological $^{\ast }$-algebra $A$ over $\mathbb{C}$ is a locally $C^{\ast
}$-algebra iff $A$ is a complete Hausdorff topological $^{\ast }$-algebra in
which topology is generated by a saturated separating family of $C^{\ast }$%
-seminorms (see \cite{Fragoulopoulou2005} for details).

\begin{example}
Every $C^{\ast }$-algebra is a locally $C^{\ast }$-algebra.
\end{example}

\begin{example}
A closed $^{\ast }$-subalgebra of a locally $C^{\ast }$-algebra is a locally 
$C^{\ast }$-algebra.
\end{example}

\begin{example}
The product $\dprod\limits_{\alpha \in \Lambda }A_{\alpha }$ of $C^{\ast }$%
-algebras $A_{\alpha }$, with the product topology, is a locally $C^{\ast }$%
-algebra.
\end{example}

\begin{example}
Let $X$ be a compactly generated Hausdorff space (this means that a subset $%
Y\subset X$ is closed iff $Y\cap K$ is closed for every compact subset $%
K\subset X$). Then the algebra $C(X)$ of all continuous, not necessarily
bounded complex-valued functions on $X,$ with the topology of uniform
convergence on compact subsets, is a locally $C^{\ast }$-algebra. It is well
known that all metrizable spaces and all locally compact Hausdorff spaces
are compactly generated (see \cite{Kelley1975} for details).
\end{example}

Let $A$ be a locally $C^{\ast }$-algebra. Then an element $a\in A$ is called 
\textbf{bounded}, if 
\begin{equation*}
\left\Vert a\right\Vert _{\infty }=\{\sup \left\Vert a\right\Vert _{\alpha
},\alpha \in \Lambda :\left\Vert .\right\Vert _{\alpha }\in N(A)\}<\infty .
\end{equation*}%
The set of all bounded elements of $A$ is denoted by $b(A).$

It is well-known that for each locally $C^{\ast }$-algebra $A,$ its set $b(A)
$ of bounded elements of $A$ is a locally $C^{\ast }$-subalgebra, which is a 
$C^{\ast }$-algebra in the norm $\left\Vert .\right\Vert _{\infty },$ such
that it is dense in $A$ in its topology (see for example \cite%
{Fragoulopoulou2005}).

\section{The uniqueness of involuton in locally C*-algebras}

Here we present the main theorem of the current notes.

\begin{theorem}
The unary operation of involution in any locally $C^{\ast }$-algebra is
unique, i.e., if $(A,^{\ast },\left\Vert .\right\Vert _{\alpha },\alpha \in
\Lambda )$ and $(A,^{\#},\left\Vert .\right\Vert _{\alpha },\alpha \in
\Lambda )$ are two locally $C^{\ast }$-algebras, means that each seminorm $%
\left\Vert .\right\Vert _{\alpha },\alpha \in \Lambda ,$ satisfies the $%
C^{\ast }$-property for both operations, $"\ast "$ and $"\#"$, then 
\begin{equation*}
^{\ast }=\#
\end{equation*}%
on $A$.
\end{theorem}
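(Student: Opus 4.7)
The plan is to reduce the statement to the Bohnenblust--Karlin theorem for $C^{\ast}$-algebras by working in each of the $C^{\ast}$-quotients of the Arens--Michael decomposition.

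First I would fix an arbitrary index $\alpha \in \Lambda$ and examine the $\ast$-ideal
\begin{equation*}
\ker\|\cdot\|_{\alpha} = \{a \in A : \|a\|_{\alpha} = 0\}.
\end{equation*}
This kernel is defined purely in terms of the algebra structure and the seminorm, with no reference to the involution. Because $\|\cdot\|_{\alpha}$ satisfies the $C^{\ast}$-condition for both $\ast$ and $\#$, the preliminaries give $\|a^{\ast}\|_{\alpha} = \|a\|_{\alpha} = \|a^{\#}\|_{\alpha}$, so $\ker\|\cdot\|_{\alpha}$ is invariant under each involution. Consequently both $\ast$ and $\#$ descend to involutions $\ast_{\alpha}, \#_{\alpha}$ on the quotient $A_{\alpha} = A/\ker\|\cdot\|_{\alpha}$.

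Next I would promote $A_{\alpha}$ to a $C^{\ast}$-algebra in two ways. The underlying Banach space of $A_{\alpha}$, along with its norm $\|\cdot\|_{\alpha}$ and its completeness, is recalled in the preliminaries and is independent of which involution one chooses. The $C^{\ast}$-condition for each of $\ast$ and $\#$ carries over to the quotient verbatim, so both $(A_{\alpha},\ast_{\alpha},\|\cdot\|_{\alpha})$ and $(A_{\alpha},\#_{\alpha},\|\cdot\|_{\alpha})$ are honest $C^{\ast}$-algebras sharing the same Banach algebra structure. Theorem 1 (Bohnenblust--Karlin) now yields $\ast_{\alpha} = \#_{\alpha}$, which translates back to $a^{\ast} - a^{\#} \in \ker\|\cdot\|_{\alpha}$ for every $a \in A$.

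Finally, since $A$ is a locally $C^{\ast}$-algebra, it is Hausdorff, equivalently the family $\{\|\cdot\|_{\alpha}\}_{\alpha \in \Lambda}$ is separating. As $\alpha$ was arbitrary, I conclude
\begin{equation*}
a^{\ast} - a^{\#} \in \bigcap_{\alpha \in \Lambda} \ker\|\cdot\|_{\alpha} = \{0\},
\end{equation*}
so $a^{\ast} = a^{\#}$ for all $a \in A$. The only step that requires any thought is the observation that passing to $A_{\alpha}$ simultaneously preserves the $C^{\ast}$-property for \emph{both} involutions; but this is automatic once one notes that completeness of $A_{\alpha}$ depends only on the Banach space structure, and both involutions are $\|\cdot\|_{\alpha}$-isometric by the remark following the $C^{\ast}$-condition. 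So the main content of the proof is the invocation of Bohnenblust--Karlin fiber-by-fiber, and no genuine new obstacle arises.
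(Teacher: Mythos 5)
Your proposal is correct and follows essentially the same route as the paper: reduce to the quotient $C^{\ast}$-algebras $A_{\alpha}$ of the Arens--Michael decomposition, apply Bohnenblust--Karlin there, and use that the family $\{\left\Vert .\right\Vert _{\alpha }\}$ is separating. The only differences are cosmetic --- you argue directly rather than by contradiction, and you spell out (more carefully than the paper does) why both involutions descend to the \emph{same} Banach algebra $A_{\alpha }$ and yield two $C^{\ast }$-structures on it.
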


\begin{proof}
Let now $A$ be a locally $C^{\ast }$-algebra, and let 
\begin{equation*}
A=\underleftarrow{\lim }A_{\alpha },
\end{equation*}
$\alpha \in \Lambda $, be its Arens-Michael decomposition, built using the
family of seminorms $\left\Vert .\right\Vert _{\alpha },\alpha \in \Lambda ,$
so that for each $\alpha \in \Lambda ,$ 
\begin{equation*}
(A_{\alpha },\ast _{\alpha },\left\Vert .\right\Vert _{\alpha })
\end{equation*}
and 
\begin{equation*}
(A_{\alpha },\#_{\alpha },\left\Vert .\right\Vert _{\alpha })
\end{equation*}
are $C^{\ast }$-algebras, where the unary operations  $"\ast _{\alpha }"$
and $"\#_{\alpha }"$ on $A_{\alpha }$ are defined as follows:

\begin{equation*}
\pi _{\alpha }(x^{\ast })=(\pi _{\alpha }(x))^{\ast _{\alpha }},
\end{equation*}%
and 
\begin{equation*}
\pi _{\alpha }(x^{\#})=(\pi _{\alpha }(x))^{\#_{\alpha }},
\end{equation*}%
for each $x\in A$ and $\alpha \in \Lambda .$

Let us now assume, to the contrary to the statement of the theorem, that
there exists some $x\in A,$ such that 
\begin{equation*}
x^{\ast }=y\neq z=x^{\#}.
\end{equation*}
Then there must exist $\alpha _{0}\in \Lambda ,$ such that 
\begin{equation*}
\pi _{\alpha _{0}}(y)\neq \pi _{\alpha _{0}}(z).
\end{equation*}
In fact, if it is not the case, and 
\begin{equation*}
\pi _{\alpha }(y)=\pi _{\alpha }(z)
\end{equation*}
for each $\alpha \in \Lambda ,$ implies that 
\begin{equation*}
y=z,
\end{equation*}
which contradicts the assumption.

So, $\alpha _{0}$ must be such that  
\begin{equation*}
\pi _{\alpha _{0}}(x^{\ast })\neq \pi _{\alpha _{0}}(x^{\#}),
\end{equation*}
which means that for 
\begin{equation*}
\pi _{\alpha _{0}}(x)=x_{\alpha _{0}}\in A_{\alpha _{0}},
\end{equation*}
\begin{equation*}
x_{\alpha _{0}}^{\ast _{\alpha }}\neq x_{\alpha _{0}}^{\#_{\alpha }},
\end{equation*}
which contradicts Theorem 1. Found contradiction proves the theorem.
\end{proof}

\end{document}